\newtheorem{theorem}{Theorem}
\newtheorem{proposition}{Proposition}
\newtheorem{definition}{Definition}
\newtheorem{corollary}{Corollary}
\newtheorem{remark}{Remark}
\newtheorem{example}{Example}
\DeclareMathOperator{\rank}{Rank}
\DeclareMathOperator{\ima}{im}
\author[Pinto $\&$ Robledo]{{Nicol\'as Pinto} \& {Gonzalo Robledo}}
\email{nicolas.pinto.p@ug.uchile.cl,grobledo@uchile.cl}
\address{Departamento de Matem\'aticas, Universidad de Chile, Casilla 653, Santiago, Chile.}
\keywords{Bohl exponents; Exponential dichotomy spectrum; Difference Equations}
\title[Bohl Exponents and Exponential Dichotomy Spectrum]{Some relations between Bohl Exponents and the Exponential Dichotomy spectrum}
\date{December 2018}
\begin{document}

\begin{abstract}
We study a liaison between the Bohl's exponents and the exponential dichotomy spectrum of a non autonomous linear system of difference equations on the whole line $\mathbb{Z}$. More specifically, We prove that for any initial condition in an invariant vector bundle, associated to its  exponential dichotomy spectrum, its Bohl's exponents are contained in an spectral interval. 
\end{abstract}

\maketitle

\section{Introduction}
The purpose of this note is to study the localization of the Bohl's exponents with respect to the bounds of the spectral intervals of the exponential dichotomy spectrum (formal definitions will be given below) 
in a linear system of difference equations
\begin{equation}
\label{lineal}
x_{n+1}=A(n)x_{n},
\end{equation}
where the sequence of matrices $A\colon \mathbb{Z}\to M_{d}(\mathbb{R})$ satisfies the following properties:
\begin{itemize}
    \item[\textbf{(A1)}] The matrices $A(n)$ are non--singular for any $n\in \mathbb{Z}$.
    \item[\textbf{(A2)}] Given a matrix norm $||\cdot||$, there exists $M>0$ such that 
    $$
    \sup\limits_{n\in \mathbb{Z}}\{||A(n)||,||A^{-1}(n)||\}\leq M \quad \textnormal{for any $n\in \mathbb{Z}$}.
    $$
\end{itemize}

\begin{definition}
\label{transition}
The transition operator of (\ref{lineal}) is the map $X\colon \mathbb{Z}\times \mathbb{Z}\to M_{d}(\mathbb{R})$ defined as follows:
\begin{equation}
\label{EO}
X(m,n)=\left\{\begin{array}{ccl}
A(m-1)\cdots A(n) & \textnormal{if} & m>n \\
I  & \textnormal{if} &   m=n \\
A^{-1}(m)\cdots A^{-1}(n-1) &\textnormal{if} & m<n.
\end{array}\right.
\end{equation}
\end{definition}

The relation between Bohl's exponents  and the exponential dichotomy spectrum of (\ref{lineal}) has been deeply studied on the half line $\mathbb{Z}_0^+$ both in the continuous \cite{Doan} and 
in the discrete case \cite{Potzsche,Potzsche-Russ}. Nevertheless, there are less results when considering the whole line and this note can be seen as a contribution in this context.

The structure of the note is as follows: in the Section 2 we recall some basic facts about the Bohl's exponents and the exponential dichotomy spectrum. While in  Section 3 we introduce our main results and present a brief discussion about them.

\section{Mathematical preliminaries}

 \subsection{Bohl's exponents}
 We will work with the following definition, considered in \cite{Babiarz15,Babiarz17-b,Nieza}:
 \begin{definition}
\label{UBE}
The upper and lower Bohl exponents of a solution $k\mapsto X(k,0)\xi$ of (\ref{lineal}) passing through $\xi\in \mathbb{R}^{d}\setminus\{0\}$ at $k=0$ are respectively   
\begin{equation}
    \label{UBE1}
\overline{\beta}_{A}(\xi):=  \limsup\limits_{m,n-m\to +\infty}\left(\frac{||X(n,0)\xi||}{||X(m,0)\xi||}\right)^{\frac{1}{n-m}}
\end{equation}
and
\begin{equation}
    \label{LBE1}
\underline{\beta}_{A}(\xi):=  \liminf\limits_{m,n-m\to +\infty}\left(\frac{||X(n,0)\xi||}{||X(m,0)\xi||}\right)^{\frac{1}{n-m}}.
\end{equation}
\end{definition}

As pointed out in \cite{Doan}, the Bohl's exponents $\overline{\beta}_{A}(\xi)$ and $\underline{\beta}_{A}(\xi)$ can be seen as measures of the biggest and smallest growth rate of the solution
$k\mapsto X(k,0)\xi$ of (\ref{lineal}). To the best of our knowledge, the Bohl's exponents for discrete systems have been studied firstly by Ben--Artzi and Gohberg \cite{Ben-Artzi} as a corresponding 
version of the continuous case studied in \cite[Ch.III]{Daleckii}, they have also been defined in an alternative but equivalent formulation by P\"otzsche in \cite{Pot2010,Potzsche,Potzsche-Russ} for algebraic and scalar difference equations.

The discrete Bohl's exponent theory has been fashioned along with extensive literature \cite{Barabanov,Daleckii,Doan,Vinograd} developed after the seminal works of Bohl \cite{Bohl} and Persidskii \cite{Persidskii} in the continuous case.
We refer the reader to the works \cite{Babiarz15,Babiarz16,Babiarz17-b,Czornik17,Nieza} for a good review and a detailed description of the relation between discrete and  continuous definitions. In this context,
the authors (see also \cite{Du}) define the senior upper and junior lower general exponents of the system (\ref{lineal}) respectively as follows:
\begin{equation}
\label{Se}
\Omega^{0}(A)=\limsup\limits_{m,n-m\to + \infty}||X(n,m)||^{\frac{1}{n-m}} 
\end{equation}
and
\begin{equation}
\label{JE}
\omega_{0}(A)=\liminf\limits_{m,n-m\to + \infty}||X(n,m)||^{\frac{-1}{n-m}},
\end{equation}
whose continuous version was introduced in Bohl's seminal work.

In addition, it can be shown (see \emph{e.g.} \cite[p.339]{Babiarz15}) that we can neglect the condition $m\to\infty$ in Definition \ref{UBE}. Namely, we have the following characterization for the Bohl exponents:
\begin{equation}
\label{UBE2}
\overline{\beta}_{A}(\xi)=  \limsup\limits_{n-m\to +\infty}\left(\frac{||X(n,0)\xi||}{||X(m,0)\xi||}\right)^{\frac{1}{n-m}}, 
\end{equation}
and
\begin{equation}
\label{LBE2}
\underline{\beta}_{A}(\xi)=  \liminf\limits_{n-m\to +\infty}\left(\frac{||X(n,0)\xi||}{||X(m,0)\xi||}\right)^{\frac{1}{n-m}}.
\end{equation}

In \cite{Babiarz15,Nieza}, the authors study a linear diagonal system (\ref{lineal}) and prove that the number of upper Bohl's exponents cannot be bigger than $2^d-1$.
This result is used in \cite{Babiarz17,Czornik17} in order to characterize the functions which can be the upper and lower Bohl function for a diagonal system.

In \cite{Babiarz17-b}, the authors characterize the senior upper general exponent in terms of upper Bohl's exponents as follows
\begin{displaymath}
\Omega^{0}(A)=\lim\limits_{\varepsilon\to 0^{+}}\sup\limits_{||Q||_{\infty}<\varepsilon}\sup\limits_{\xi\in \mathbb{R}^{d}\setminus \{0\}}\overline{\beta}_{A+Q}(\xi).
\end{displaymath}

\subsection{Exponential dichotomy spectrum}
Exponential dichotomy can be seen as a possible extension of the well known property of hyperbolicity to the nonautonomous framework. In order to define it, we need to introduce the following definition:
\begin{definition}
\label{IP0}
An invariant projector of (\ref{lineal}) is a map $P\colon \mathbb{Z}\to M_{d}(\mathbb{R})$ verifying
\begin{equation}
\label{IP1}
P^{2}(k)=P(k) \quad \textnormal{and} \quad P(k+1)A(k)=A(k)P(k)  \quad \textnormal{for any $k\in \mathbb{Z}$}.
\end{equation}
\end{definition}
It is straightforward to verify that if $P$ is an invariant projector of (\ref{lineal}), then
\begin{equation}
\label{IP2}
X(k,\ell)P(\ell) = P(k)X(k,\ell) \quad \textnormal{for any $k,\ell \in \mathbb{Z}$}.
\end{equation}

\begin{definition}
\label{dichotomy}
The system (\ref{lineal}) has an exponential dichotomy on $\mathbb{Z}$ if there exists an invariant projector $P$ and two constants $K>0,\rho\in (0,1)$ such that
\begin{equation}
\label{ED1}
\left\{\begin{array}{rcl}
||X(k,\ell)P(\ell)||\leq K\rho^{k-\ell} &\textnormal{if} &  k\geq \ell  \quad \textnormal{and}\quad  k,\ell\in \mathbb{Z}\\\\
||X(k,\ell)[I-P(\ell)]||\leq K\rho^{\ell-k} &\textnormal{if} & k\leq \ell \quad \textnormal{and}\quad  k,\ell\in \mathbb{Z}.
\end{array}\right.
\end{equation}

\end{definition}

\begin{definition}
\label{spectrum1}
The exponential dichotomy spectrum of (\ref{lineal}) is the set $\Sigma(A)$ composed of all numbers $\gamma \in (0,+\infty)$ such that 
the system
\begin{equation}   
\label{lineal-g}
    x(n+1)=\frac{1}{\gamma}A(n)x(n) .
\end{equation}    
has no exponential dichotomy on $\mathbb{Z}$. Moreover, the set $\rho(A):=(0,+\infty)\setminus \Sigma(A)$ is called the resolvent of (\ref{lineal}).
\end{definition}

The spectral theory associated to the exponential dichotomy has been extensively developed for discrete and continuous systems both in finite \cite{Aulbach,Potzsche-Russ} and infinite dimensions \cite{Russ}.
In order to contextualize our main result, We will recall a fundamental result:
\begin{proposition}\cite[Th.2.1]{Aulbach}
If \textnormal{\textbf{(A1)--(A2)}} are satisfied, then the exponential dichotomy spectrum of (\ref{lineal}) is the union of $\ell\leq d$ closed intervals as follows:
\begin{equation*}
    \Sigma(A)=[a_{1},b_{1}]\cup [a_{2},b_{2}] \cup \cdots \cup [a_{\ell},b_{\ell}],
\end{equation*}
where $a_{i}\leq b_{i} <a_{i+1}$ for any $i=1,\ldots,\ell-1$ and the resolvent of (\ref{lineal}) is the union of open intervals: 
\begin{displaymath}
\rho(A)=\rho_{0}(A)\cup \rho_{1}(A) \cup \ldots \cup \rho_{\ell}(A) \quad \textnormal{where} \quad \rho_{i}(A)=\left\{\begin{array}{rl}
(0,a_{1}) & \textnormal{if $i=0$}\\
(b_{i},a_{i+1}) & \textnormal{if $1<i<\ell$}\\
(b_{\ell},+\infty) & \textnormal{if $i=\ell$}.
\end{array}\right. 
\end{displaymath}
\end{proposition}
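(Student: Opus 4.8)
The plan is to deduce the statement from three facts: the \emph{roughness} of exponential dichotomies (which will give that $\rho(A)$ is open), a crude two--sided bound furnished by \textbf{(A2)} (which will give that $\Sigma(A)$ is bounded, hence compact), and a monotonicity argument for the ranks of the invariant projectors attached to the points of $\rho(A)$ (which will bound the number of connected components of $\Sigma(A)$ by $d$). Throughout I write $X_{\gamma}(m,n)=\gamma^{n-m}X(m,n)$ for the transition operator of (\ref{lineal-g}), and I recall that on the whole line $\mathbb{Z}$ the invariant projector of an exponential dichotomy is unique, so that for each $\gamma\in\rho(A)$ there is a well--defined projector $P_{\gamma}$ satisfying (\ref{IP1}) and (\ref{ED1}) with some data $K_{\gamma}>0$, $\rho_{\gamma}\in(0,1)$; since $P_{\gamma}(k+1)A(k)=A(k)P_{\gamma}(k)$ and the $A(k)$ are invertible, $r(\gamma):=\rank P_{\gamma}(k)$ does not depend on $k$.

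First I would settle the topological part. If $\gamma_{0}\in\rho(A)$, applying the roughness theorem for discrete dichotomies to the coefficient sequences $\gamma_{0}^{-1}A(\cdot)$ and $\gamma^{-1}A(\cdot)$, together with the estimate $\sup_{k}\|\gamma^{-1}A(k)-\gamma_{0}^{-1}A(k)\|\le M|\gamma^{-1}-\gamma_{0}^{-1}|$ coming from \textbf{(A2)}, shows that $\gamma\in\rho(A)$ for all $\gamma$ near $\gamma_{0}$ and that the corresponding projector stays within norm $<1$ of $P_{\gamma_{0}}$; hence $\rho(A)$ is open and $r(\cdot)$ is locally constant on $\rho(A)$. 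For boundedness, \textbf{(A2)} gives $\|X(k,\ell)\|\le M^{|k-\ell|}$, so the trivial projectors $P\equiv I$ (for $\gamma>M$) and $P\equiv 0$ (for $\gamma<M^{-1}$) realize dichotomies of (\ref{lineal-g}); therefore $\Sigma(A)\subseteq[M^{-1},M]$ is compact, $\rho(A)$ is a disjoint union of at most countably many open intervals, its leftmost component is $(0,a_{1})$ with $a_{1}=\min\Sigma(A)$ and $r\equiv 0$ there (by uniqueness and local constancy), and its rightmost component is $(b_{\ell},+\infty)$ with $b_{\ell}=\max\Sigma(A)$ and $r\equiv d$ there.

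Next I would prove that $r(\cdot)$ is nondecreasing. For $\gamma\in\rho(A)$ I would identify $\ima P_{\gamma}(0)$ with the stable set $\mathcal{S}_{\gamma}:=\{\xi:\ \sup_{n\ge 0}\gamma^{-n}\|X(n,0)\xi\|<\infty\}$ and $\ker P_{\gamma}(0)$ with the unstable set $\mathcal{U}_{\gamma}:=\{\xi:\ \sup_{n\le 0}\gamma^{-n}\|X(n,0)\xi\|<\infty\}$: the inclusions ``$\subseteq$'' are immediate from (\ref{ED1}), and for the reverse ones I would split $\xi$ along $P_{\gamma}(0)$ and invoke (\ref{ED1}) together with (\ref{IP2}) to force the complementary component of $\xi$ to vanish. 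Since, for fixed $\xi$, $\gamma^{-n}\|X(n,0)\xi\|$ is a nonincreasing function of $\gamma$ when $n\ge 0$ and a nondecreasing one when $n\le 0$, this yields $\mathcal{S}_{\gamma_{1}}\subseteq\mathcal{S}_{\gamma_{2}}$ and $\mathcal{U}_{\gamma_{2}}\subseteq\mathcal{U}_{\gamma_{1}}$ whenever $\gamma_{1}<\gamma_{2}$; in particular $r(\gamma_{1})=\dim\mathcal{S}_{\gamma_{1}}\le\dim\mathcal{S}_{\gamma_{2}}=r(\gamma_{2})$.

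The decisive step is to show that $r$ strictly increases across each component of $\Sigma(A)$: precisely, if $\gamma^{-}<\gamma^{+}$ both lie in $\rho(A)$ and $[\gamma^{-},\gamma^{+}]\cap\Sigma(A)\ne\emptyset$, then $r(\gamma^{-})<r(\gamma^{+})$. Arguing by contradiction, if $r(\gamma^{-})=r(\gamma^{+})$ then the preceding step forces $\ima P_{\gamma^{-}}(0)=\ima P_{\gamma^{+}}(0)$ and $\ker P_{\gamma^{-}}(0)=\ker P_{\gamma^{+}}(0)$, hence $P_{\gamma^{-}}=P_{\gamma^{+}}=:P$ (propagate the equality from $k=0$ via (\ref{IP2})). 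Rescaling the two dichotomies of $P$ then gives, for every $\gamma\in[\gamma^{-},\gamma^{+}]$, the estimates $\|X_{\gamma}(k,\ell)P(\ell)\|\le K_{\gamma^{-}}(\gamma^{-}\rho_{\gamma^{-}}/\gamma)^{k-\ell}$ for $k\ge\ell$ and $\|X_{\gamma}(k,\ell)[I-P(\ell)]\|\le K_{\gamma^{+}}(\gamma\rho_{\gamma^{+}}/\gamma^{+})^{\ell-k}$ for $k\le\ell$, whose ratios are $<1$ because $\gamma^{-}\rho_{\gamma^{-}}<\gamma^{-}\le\gamma$ and $\gamma\rho_{\gamma^{+}}<\gamma\le\gamma^{+}$; thus (\ref{lineal-g}) has an exponential dichotomy for every $\gamma\in[\gamma^{-},\gamma^{+}]$, i.e. $[\gamma^{-},\gamma^{+}]\subseteq\rho(A)$, a contradiction. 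Applying this with $\gamma^{\pm}$ bracketing each spectral component $[a_{i},b_{i}]$, and using that $r$ is nondecreasing, integer--valued in $\{0,\dots,d\}$, locally constant on $\rho(A)$, equal to $0$ on $(0,a_{1})$ and to $d$ on $(b_{\ell},+\infty)$, one concludes that there are at most $d$ spectral components, that $\Sigma(A)$ has exactly the asserted form, and that $r$ is constant on each $\rho_{i}(A)$. I expect the main obstacle to be this last step, and inside it the verification that the rescaled system keeps its dichotomy \emph{on the whole bracket} — the inequalities $\gamma^{-}\rho_{\gamma^{-}}<\gamma$ and $\gamma\rho_{\gamma^{+}}<\gamma^{+}$ are precisely what pins the rank jumps to the endpoints of the spectral intervals; the roughness input used for openness is the other non--elementary ingredient, but by now it is classical for systems satisfying \textbf{(A1)--(A2)}.
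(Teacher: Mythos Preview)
The paper does not give its own proof of this proposition: it is quoted verbatim from \cite{Aulbach} and used as a black box, so there is nothing in the paper to compare your argument against. Your proposal is in fact the standard proof of the discrete Sacker--Sell spectral theorem as carried out in \cite{Aulbach} (and, in the continuous case, in the Sacker--Sell and Siegmund literature): openness of $\rho(A)$ via roughness, compactness of $\Sigma(A)$ via the two--sided bound from \textbf{(A2)}, the dynamical identification $\ima P_{\gamma}(0)=\mathcal{S}_{\gamma}$ and $\ker P_{\gamma}(0)=\mathcal{U}_{\gamma}$ giving monotonicity of $r(\gamma)$, and the interpolation argument forcing a strict rank jump across each spectral component. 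The sketch is correct; the only ingredient you invoke without proof is roughness on $\mathbb{Z}$, which is indeed classical under \textbf{(A1)--(A2)}.
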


The intervals $[a_{i},b_{i}]$ are known as the $i$th \emph{spectral intervals} of $\Sigma(A)$ while the intervals $\rho_{i}(A)$ are known as the $i$th \emph{spectral gaps} or
$i$th \emph{connected components} of $\rho(A)$.

\begin{remark}
\label{obs1}
It is well known (see \cite{Aulbach} for details) that:\newline

\noindent i) By definition of $\Sigma(A)$, for any $\gamma \in \rho_{j}(A)$ the system (\ref{lineal-g}) has an exponential dichotomy with transition matrix $X_{\gamma}(m,n)=\frac{1}{\gamma} X(m,n)$, an invariant projector $P_{\gamma}$ and constants $K_{\gamma}>0,\rho_{\gamma}\in (0,1)$.

\noindent ii) The images of the above projectors $P_{\gamma}$ coincide for any $\gamma \in \rho_{j}(A)$. 
    
\noindent iii) If $\gamma,\mu\in \rho(A)$ where $\gamma<\mu$ and they are not in the same spectral gap, then $\rank P_{\gamma}<\rank P_{\mu}$. Moreover, if $\gamma\in \rho_{0}(A)$ and 
    $\mu \in \rho_{\ell}(A)$ then
    $P_{\gamma}=0$ and $P_{\mu}=I$.
\end{remark}

For each $i\in\{1,\cdots,\ell\}$, let $\gamma_i\in \rho_i(A)$ and consider the vector bundles
\begin{displaymath}
\mathcal{W}_{i}=\bigcup\limits_{k\in \mathbb{Z}}\{k\}\times \left[\ker P_{\gamma_{i-1}}(k)\cap \ima P_{\gamma_{i}}(k)\right],
\end{displaymath}
which are induced by the fibers
\begin{equation}
\mathcal{W}_{i}(n)=\ker P_{\gamma_{i-1}}(n)\cap \ima P_{\gamma_{i}}(n).
\end{equation}
Moreover, we also define $\mathcal{W}_0:=\mathbb{Z}\times \{0\}$ and $\mathcal{W}_{\ell+1}:=\mathbb{Z}\times\{0\}$.

We will recall a second fundamental result necessary to introduce our main result:
\begin{proposition}\cite[Th. 2.1]{Aulbach}
The sets $\mathcal{W}_{i}$ are invariant vector bundles of (\ref{lineal}) and they are independent of the choice of $\gamma_{i}$ (with $i=0,\dots,\ell$). Moreover
$$
\mathcal{W}_{0}\oplus \dots \oplus \mathcal{W}_{\ell+1}=\mathbb{Z}\times \mathbb{R}^{d}
$$
is a Whitney sum, namely, $\mathcal{W}_{i}\cap\mathcal{W}_{j}=\mathbb{Z}\times \{0\}$ for $i\neq j$ and
$$
\mathcal{W}_{0} + \dots + \mathcal{W}_{\ell+1}=\mathbb{Z}\times \mathbb{R}^{d}.
$$
\end{proposition}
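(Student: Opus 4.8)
The plan is to realize each of the non--trivial bundles $\mathcal{W}_i$ (those with $1\le i\le\ell$) as the image of an invariant projector built from the dichotomy projectors supplied by Remark \ref{obs1}; granting this, invariance, independence of the chosen $\gamma_j$'s, and the Whitney decomposition all reduce to elementary linear algebra. Note first that any choice $\gamma_j\in\rho_j(A)$ automatically satisfies $\gamma_0<\gamma_1<\dots<\gamma_\ell$, because the spectral gaps are linearly ordered along $(0,+\infty)$.

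I would begin with the standard description of the dichotomy subspaces: for $\gamma\in\rho(A)$, writing $X_\gamma(m,n)=\gamma^{-(m-n)}X(m,n)$ for the transition operator of (\ref{lineal-g}), one has
\[\ima P_\gamma(n)=\left\{\xi\in\mathbb{R}^{d}:\ \sup_{k\ge n}\gamma^{-(k-n)}\|X(k,n)\xi\|<+\infty\right\},\qquad \ker P_\gamma(n)=\left\{\xi\in\mathbb{R}^{d}:\ \sup_{k\le n}\gamma^{-(k-n)}\|X(k,n)\xi\|<+\infty\right\}.\]
Each identity follows by splitting $\xi=P_\gamma(n)\xi+[I-P_\gamma(n)]\xi$, bounding the two summands by the inequalities (\ref{ED1}) written for $X_\gamma$, and using (\ref{IP2}). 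Comparing these supremum conditions for consecutive indices --- immediate because $\gamma_{i-1}<\gamma_i$ --- yields the nesting $\ima P_{\gamma_{i-1}}(n)\subseteq\ima P_{\gamma_i}(n)$ and $\ker P_{\gamma_i}(n)\subseteq\ker P_{\gamma_{i-1}}(n)$, and more generally $\ima P_{\gamma_a}(n)\subseteq\ima P_{\gamma_b}(n)$, $\ker P_{\gamma_b}(n)\subseteq\ker P_{\gamma_a}(n)$ for $a\le b$. Since a projector is the identity on its own image and annihilates its own kernel, the nesting forces $P_{\gamma_a}(n)P_{\gamma_b}(n)=P_{\gamma_{\min(a,b)}}(n)$ for all $a\le b$; in particular $P_{\gamma_i}(n)P_{\gamma_{i-1}}(n)=P_{\gamma_{i-1}}(n)=P_{\gamma_{i-1}}(n)P_{\gamma_i}(n)$, so that $Q_i(n):=P_{\gamma_i}(n)-P_{\gamma_{i-1}}(n)$ satisfies $Q_i^{2}(n)=Q_i(n)$ and $\ima Q_i(n)=\ker P_{\gamma_{i-1}}(n)\cap\ima P_{\gamma_i}(n)=\mathcal{W}_i(n)$. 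As the relations (\ref{IP1}) are linear in the projector, $Q_i$ is again an invariant projector; then (\ref{IP2}) for $Q_i$ together with the invertibility of the $A(n)$ shows that $X(m,n)$ restricts to a linear isomorphism $\mathcal{W}_i(n)\to\mathcal{W}_i(m)$, so $\mathcal{W}_i$ is an invariant vector bundle with fibers of constant dimension.

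For the independence of $\mathcal{W}_i=\ima Q_i$ from the choices, it suffices to see that $P_{\gamma_{i-1}}$ and $P_{\gamma_i}$ do not depend on the particular $\gamma_{i-1}\in\rho_{i-1}(A)$, $\gamma_i\in\rho_i(A)$. The images $\ima P_{\gamma_j}$ are constant along $\rho_j(A)$ by Remark \ref{obs1}(ii); the kernels $\ker P_{\gamma_j}$ are constant along $\rho_j(A)$ by the same statement applied to the time--reversed equation, which again satisfies the standing hypotheses, whose dichotomy spectrum is $\{\gamma^{-1}:\gamma\in\Sigma(A)\}$ and whose dichotomy projector attached to the parameter $\gamma^{-1}$ corresponds, after undoing the reversal, to $I-P_\gamma$. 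Since a projector is determined by its image and its kernel, $P_{\gamma_j}$ is constant on each connected component of $\rho(A)$, and hence so is $\mathcal{W}_i$. It remains to exhibit the Whitney sum: choosing $\gamma_0\in\rho_0(A)$ and $\gamma_\ell\in\rho_\ell(A)$ so that, by Remark \ref{obs1}(iii), $P_{\gamma_0}=0$ and $P_{\gamma_\ell}=I$, the sum telescopes, $\sum_{i=1}^{\ell}Q_i(n)=P_{\gamma_\ell}(n)-P_{\gamma_0}(n)=I$, while $P_{\gamma_a}P_{\gamma_b}=P_{\gamma_{\min(a,b)}}$ gives $Q_i(n)Q_j(n)=0$ for $i\ne j$; thus $\{Q_i(n)\}_{i=1}^{\ell}$ is a complete system of mutually annihilating idempotents, so $\mathbb{R}^{d}=\bigoplus_{i=1}^{\ell}\mathcal{W}_i(n)$ for every $n\in\mathbb{Z}$, and adjoining the trivial bundles $\mathcal{W}_0=\mathcal{W}_{\ell+1}=\mathbb{Z}\times\{0\}$ gives the asserted sum. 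The only step requiring real care --- all the rest being linear algebra together with Remark \ref{obs1} --- is the constancy of the kernels $\ker P_\gamma(n)$ along a spectral gap, since Remark \ref{obs1}(ii) is stated only for the images; the passage to the time--reversed equation (or a direct roughness argument inside the gap) is the part to be carried out with care.
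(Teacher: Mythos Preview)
The paper does not prove this proposition at all; it is quoted from \cite[Th.~2.1]{Aulbach} as a preliminary result, so there is no in-paper argument to compare your proposal against. That said, your sketch is a correct outline of the standard proof: the supremum characterizations of $\ima P_\gamma(n)$ and $\ker P_\gamma(n)$ yield the nesting, from which the commutation $P_{\gamma_a}P_{\gamma_b}=P_{\gamma_{\min(a,b)}}$ and the idempotence of $Q_i=P_{\gamma_i}-P_{\gamma_{i-1}}$ follow by the linear-algebra computations you indicate; the telescoping $\sum_i Q_i=I$ together with $Q_iQ_j=0$ for $i\neq j$ then gives the Whitney sum fiberwise. The one point you flagged yourself---that Remark~\ref{obs1}(ii) only asserts constancy of $\ima P_\gamma$ along a gap, not of $\ker P_\gamma$---is the only place requiring genuine care, and your time-reversal argument is valid under \textbf{(A1)--(A2)}.
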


\section{Main results}
\begin{theorem}
\label{T1}
For any initial condition $\xi_{i}\in \mathcal{W}_{i}(0)\setminus \{0\}$ of (\ref{lineal}) with $i=1,\ldots,\ell$, then it follows that $\overline{\beta}_{A}(\xi_{i})\in [a_{i},b_{i}]$.
\end{theorem}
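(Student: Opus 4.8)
The plan is to prove the two estimates $\overline{\beta}_A(\xi_i)\le b_i$ and $\overline{\beta}_A(\xi_i)\ge a_i$ separately. The point of departure is that $\xi_i\in\mathcal{W}_i(0)$ says exactly that $\xi_i\in\ima P_{\gamma_i}(0)$ and simultaneously $\xi_i\in\ker P_{\gamma_{i-1}}(0)=\ima\bigl[I-P_{\gamma_{i-1}}(0)\bigr]$, and this holds for \emph{every} choice of $\gamma_i\in\rho_i(A)$ and $\gamma_{i-1}\in\rho_{i-1}(A)$ since the fibers $\mathcal{W}_i(n)$ do not depend on these choices. Because each $P_{\gamma}$ is also an invariant projector of (\ref{lineal}), relation (\ref{IP2}) propagates both properties along the orbit, giving $X(k,0)\xi_i\in\ima P_{\gamma_i}(k)$ and $X(k,0)\xi_i\in\ima\bigl[I-P_{\gamma_{i-1}}(k)\bigr]$ for all $k\in\mathbb{Z}$; this is what allows the two branches of the dichotomy estimate of Remark \ref{obs1}(i) to be used along the solution.

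For the upper bound, fix $\gamma_i\in\rho_i(A)$. Writing the dichotomy of the scaled system $x(k+1)=\frac{1}{\gamma_i}A(k)x(k)$ in terms of the original transition operator, the first line of (\ref{ED1}) becomes $\|X(k,\ell)P_{\gamma_i}(\ell)\|\le K_{\gamma_i}(\gamma_i\rho_{\gamma_i})^{k-\ell}$ for $k\ge\ell$. For integers $m\le n$, the identity $X(n,0)\xi_i=X(n,m)P_{\gamma_i}(m)X(m,0)\xi_i$ then yields $\|X(n,0)\xi_i\|\le K_{\gamma_i}(\gamma_i\rho_{\gamma_i})^{n-m}\|X(m,0)\xi_i\|$, where $X(m,0)\xi_i\neq 0$ by \textbf{(A1)}. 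Dividing, extracting $(n-m)$-th roots and letting $n-m\to+\infty$ in the characterization (\ref{UBE2}) gives $\overline{\beta}_A(\xi_i)\le\gamma_i\rho_{\gamma_i}<\gamma_i$; as $\gamma_i$ runs through $\rho_i(A)=(b_i,a_{i+1})$ (resp. $(b_\ell,+\infty)$ for $i=\ell$) and may be taken arbitrarily close to $b_i$, we conclude $\overline{\beta}_A(\xi_i)\le b_i$.

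For the lower bound, fix $\gamma_{i-1}\in\rho_{i-1}(A)$. The second line of (\ref{ED1}) for the scaled system $x(k+1)=\frac{1}{\gamma_{i-1}}A(k)x(k)$, rewritten via the original transition operator, reads $\|X(k,\ell)[I-P_{\gamma_{i-1}}(\ell)]\|\le K_{\gamma_{i-1}}(\rho_{\gamma_{i-1}}/\gamma_{i-1})^{\ell-k}$ for $k\le\ell$. For $m\le n$, using $X(m,0)\xi_i=X(m,n)\bigl[I-P_{\gamma_{i-1}}(n)\bigr]X(n,0)\xi_i$ gives $\|X(m,0)\xi_i\|\le K_{\gamma_{i-1}}(\rho_{\gamma_{i-1}}/\gamma_{i-1})^{n-m}\|X(n,0)\xi_i\|$, i.e. $\|X(n,0)\xi_i\|/\|X(m,0)\xi_i\|\ge K_{\gamma_{i-1}}^{-1}(\gamma_{i-1}/\rho_{\gamma_{i-1}})^{n-m}$. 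Since $\limsup\ge\liminf$, extracting roots and passing to the limit forces $\overline{\beta}_A(\xi_i)\ge\gamma_{i-1}/\rho_{\gamma_{i-1}}>\gamma_{i-1}$, and since $\gamma_{i-1}$ runs through $\rho_{i-1}(A)=(b_{i-1},a_i)$ (resp. $(0,a_1)$ for $i=1$) and may be taken arbitrarily close to $a_i$, we obtain $\overline{\beta}_A(\xi_i)\ge a_i$. Combining both estimates proves $\overline{\beta}_A(\xi_i)\in[a_i,b_i]$.

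I do not expect a genuine obstacle; the substance is bookkeeping, and the delicate points are few. One must use the \emph{stable} branch of the estimate forward in time for the upper bound and the \emph{unstable} branch backward in time for the lower bound --- this is precisely why $\xi_i$ is required to sit in the intersection $\ker P_{\gamma_{i-1}}(0)\cap\ima P_{\gamma_i}(0)$ rather than in only one of the two subspaces. The endpoint cases $i=1$ (where $P_{\gamma_0}\equiv 0$) and $i=\ell$ (where $P_{\gamma_\ell}\equiv I$) must be noted, but there one of the two estimates simply becomes a bound on the whole scaled transition operator and the argument is unchanged. Finally, the constants $K_{\gamma}$ disappear under the $(n-m)$-th root as $n-m\to+\infty$, which is immediate.
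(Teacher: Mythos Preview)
Your proof is correct and follows essentially the same approach as the paper's: both split into the upper bound via $\xi_i\in\ima P_{\gamma_i}(0)$ with the stable branch of the dichotomy, and the lower bound via $\xi_i\in\ker P_{\gamma_{i-1}}(0)$ with the unstable branch, then let $\gamma_i\downarrow b_i$ and $\gamma_{i-1}\uparrow a_i$. The only cosmetic difference is that the paper carries the scaled transition operator $X_\gamma$ through the computation whereas you translate the dichotomy inequalities back to the original $X$ at the outset; the resulting estimates are identical, and your explicit mention of the endpoint cases $i=1$ and $i=\ell$ is a small bonus.
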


\begin{proof}
As $\xi_{i}\in \mathcal{W}_{i}(0)$ it follows that $\xi_{i} \in  \ima P_{\gamma_{i}}(0) \cap \ker P_{\gamma_{i-1}}(0)$. The proof will be decomposed in two steps.

\noindent \emph{Step 1}: As $\xi_{i} \in  \ima P_{\gamma_{i}}(0)$ and consequently $P_{\gamma_{i}}(0)\xi_{i}=\xi_{i}$, then we can deduce that
\begin{displaymath}
\begin{array}{rcl}
\displaystyle \frac{||X(n,0)\xi_{i}||}{||X(m,0)\xi_{i}||} & = & \displaystyle  \frac{||X(n,0)\gamma_{i}^{-n}\xi_{i}||}{||X(m,0)\gamma_{i}^{-m}\xi_{i}||}\,\gamma_{i}^{n-m} \\\\
&=&  \displaystyle  \frac{||X_{\gamma_{i}}(n,0)\xi_{i}||}{||X_{\gamma_{i}}(m,0)\xi_{i}||}\,\gamma_{i}^{n-m} \\\\

&=&  \displaystyle  \frac{||X_{\gamma_{i}}(n,m)X_{\gamma_{i}}(m,0)\xi_{i}||}{||X_{\gamma_{i}}(m,0)\xi_{i}||}\,\gamma_{i}^{n-m} \\\\
&=&  \displaystyle  \frac{||X_{\gamma_{i}}(n,m)P_{\gamma_{i}}(m)X_{\gamma_{i}}(m,0)\xi_{i}||}{||X_{\gamma_{i}}(m,0)\xi_{i}||}\gamma_{i}^{n-m},
\end{array}
\end{displaymath}
where we have used that $X_{\gamma_{i}}$ is the transition matrix of (\ref{lineal-g}) with $\gamma=\gamma_{i}$ and the identity 
$P_{\gamma_{i}}(m)X_{\gamma_{i}}(m,0)\xi_{i}=X_{\gamma_{i}}(m,0)\xi_{i}$, which follows from (\ref{IP2}) combined with the fact $P_{\gamma_{i}}(0)\xi_{i}=\xi_{i}$.

The above identities combined with the fact that $\gamma_{i}\in \rho_{i}(A)$ imply that for $n\geq m$: 
\begin{displaymath}
\begin{array}{rcl}
\displaystyle \frac{||X(n,0)\xi_{i}||}{||X(m,0)\xi_{i}||} & \leq  & ||X_{\gamma_{i}}(n,m)P_{\gamma_{i}}(m)||\gamma_{i}^{n-m} \\\\
                                                          & \leq  & K_{\gamma_{i}}\rho_{\gamma_{i}}^{n-m}\gamma_{i}^{n-m},
\end{array}
\end{displaymath}
where $K_{\gamma_{i}}$ and $\rho_{\gamma_{i}}$ are the constants defined in the statement i) from Remark \ref{obs1} with $\gamma=\gamma_{i}$.
Now, by using the fact that $0<\rho_{\gamma_{i}}<1$, it is easy to see that
\begin{displaymath}
\overline{\beta}_{A}(\xi_{i})=\limsup\limits_{n-m\to +\infty}\left(\frac{||X(n,0)\xi_{i}||}{||X(m,0)\xi_{i}||}\right)^{\frac{1}{n-m}} \leq \rho_{\gamma_{i}}\gamma_{i}<\gamma_{i}\in \rho_{i}(A).
\end{displaymath}

As the above result is independent of the choice of $\gamma_{i}\in (b_{i},a_{i+1})$, we have that $\overline{\beta}(\xi_{i})<b_{i}+\varepsilon$ for any $\varepsilon>0$ small enough
and we have that 
\begin{equation}
\label{derecha}
\overline{\beta}(\xi_{i})\leq b_{i}.
\end{equation}

\noindent \emph{Step 2}: As also $\xi_{i}\in \ker P_{\gamma_{i-1}}(0)$,  we can see that
\begin{displaymath}
\begin{array}{rcl}
\displaystyle \frac{||X(n,0)\xi_{i}||}{||X(m,0)\xi_{i}||} & = & \displaystyle  \frac{||X(n,0)\gamma_{i-1}^{-n}\xi_{i}||}{||X(m,0)\gamma_{i-1}^{-m}\xi_{i}||}\,\gamma_{i-1}^{n-m} \\\\
& = &  \displaystyle  \frac{||X_{\gamma_{i-1}}(n,0)\xi_{i}||}{||X_{\gamma_{i-1}}(m,0)\xi_{i}||}\,\gamma_{i-1}^{n-m} \\\\\
& = &  \displaystyle  \frac{||X_{\gamma_{i-1}}(n,0)\xi_{i}||}{||X_{\gamma_{i-1}}(m,n)X_{\gamma_{i-1}}(n,0)\xi_{i}||}\,\gamma_{i-1}^{n-m} \\\\\
\end{array}
\end{displaymath}
and consequently
\begin{displaymath}
\begin{array}{rcl}
\left(\displaystyle \frac{||X(n,0)\xi_{i}||}{||X(m,0)\xi_{i}||}\right)^{-1} & = & \displaystyle \frac{||X_{\gamma_{i-1}}(m,n)X_{\gamma_{i-1}}(n,0)\xi_{i}||}{||X_{\gamma_{i-1}}(n,0)\xi_{i}||\gamma_{i-1}^{n-m}} \\\\
&=& \displaystyle \frac{||X_{\gamma_{i-1}}(m,n)[I-P_{\gamma_{i-1}}(n)]X_{\gamma_{i-1}}(n,0)\xi_{i}||}{||X_{\gamma_{i-1}}(n,0)\xi_{i}||\gamma_{i-1}^{n-m}} \\\\
&\leq & \displaystyle \frac{||X_{\gamma_{i-1}}(m,n)[I-P_{\gamma_{i-1}}(n)]||}{\gamma_{i-1}^{n-m}},
\end{array}
\end{displaymath}
where we have used that $X_{\gamma_{i-1}}$ is the transition matrix of (\ref{lineal-g}) with $\gamma=\gamma_{i-1}$ and the identity 
$$
[I-P_{\gamma_{i-1}}(n)]X_{\gamma_{i-1}}(n,0)\xi_{i}=X_{\gamma_{i-1}}(n,0)\xi_{i},
$$
which follows from (\ref{IP2}) and $\xi_{i}\in \ker P_{\gamma_{i-1}}(0)$.

Now, as $n\geq m$ and $\gamma_{i-1}\in \rho(A)$ it follows that
\begin{displaymath}
\begin{array}{rcl}
\left(\displaystyle \frac{||X(n,0)\xi_{i}||}{||X(m,0)\xi_{i}||}\right)^{-1} & \leq & \displaystyle  K_{\gamma_{i-1}}\left(\frac{\rho_{\gamma_{i-1}}}{\gamma_{i-1}}\right)^{n-m},
\end{array}
\end{displaymath}
and we can deduce that
\begin{displaymath}
\frac{1}{\overline{\beta}(\xi_{i})} < \frac{1}{\gamma_{i-1}} \quad \textnormal{with $\gamma_{i-1}\in \rho_{i-1}(A)$}.
\end{displaymath}

As the above result is also independent of the choice of $\gamma_{i-1}\in (b_{i-1},a_{i})$, we have that $a_{i}-\varepsilon <\overline{\beta}(\xi_{i})$ for any $\varepsilon>0$ small enough, which implies that 
\begin{equation}
\label{izquierda}
a_{i}\leq \overline{\beta}_{A}(\xi_{i}).
\end{equation}
\end{proof}

\begin{remark}
\label{obnp}
By following the lines of the proof of Theorem \ref{T1}, we can easily deduce that $\underline{\beta}_{A}(\xi_{i})\in [a_{i},b_{i}]$ for any initial condition $\xi_{i}\in \mathcal{W}_{i}(0)\setminus \{0\}$ of (\ref{lineal}).
\end{remark}

The above Remark combined with the inequality $\underline{\beta}_{A}(\xi_{i})\leq \overline{\beta}_{A}(\xi_{i})$ imply the following result:
\begin{corollary}
For any initial condition $\xi_{i}\in \mathcal{W}_{i}(0)\setminus \{0\}$ of (\ref{lineal}) with $i=1,\ldots,\ell$, it follows that $[\underline{\beta}_{A}(\xi_{i}),\overline{\beta}_{A}(\xi_{i})]\subseteq [a_{i},b_{i}]$.
\end{corollary}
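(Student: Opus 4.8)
The plan is to read off the inclusion from the two one-sided estimates that are essentially already available, glued together by the trivial inequality between $\liminf$ and $\limsup$. Indeed, Step 1 of the proof of Theorem \ref{T1} --- precisely the bound (\ref{derecha}) --- gives $\overline{\beta}_{A}(\xi_{i})\leq b_{i}$, and Remark \ref{obnp} furnishes the companion bound $a_{i}\leq\underline{\beta}_{A}(\xi_{i})$; so the only thing left to do is to connect these two inequalities through the middle.

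For that connection I would first note that, for every $\xi\in\mathbb{R}^{d}\setminus\{0\}$, the characterizations (\ref{UBE2}) and (\ref{LBE2}) present $\underline{\beta}_{A}(\xi)$ and $\overline{\beta}_{A}(\xi)$ as, respectively, the $\liminf$ and the $\limsup$ --- taken along the \emph{same} net $n-m\to+\infty$ --- of the single positive quantity $\left(\|X(n,0)\xi\|/\|X(m,0)\xi\|\right)^{1/(n-m)}$; hence $\underline{\beta}_{A}(\xi)\leq\overline{\beta}_{A}(\xi)$. Taking $\xi=\xi_{i}$ and inserting this in the middle of the two previous bounds produces the chain $a_{i}\leq\underline{\beta}_{A}(\xi_{i})\leq\overline{\beta}_{A}(\xi_{i})\leq b_{i}$, which, since $[a_{i},b_{i}]$ is an interval, is exactly the claimed inclusion $[\underline{\beta}_{A}(\xi_{i}),\overline{\beta}_{A}(\xi_{i})]\subseteq[a_{i},b_{i}]$.

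To keep the argument self-contained I would also justify the half of Remark \ref{obnp} that is being used, namely $a_{i}\leq\underline{\beta}_{A}(\xi_{i})$, by rerunning Step 2 of the proof of Theorem \ref{T1} with $\liminf$ in place of $\limsup$: starting from $\left(\|X(n,0)\xi_{i}\|/\|X(m,0)\xi_{i}\|\right)^{-1}\leq K_{\gamma_{i-1}}\left(\rho_{\gamma_{i-1}}/\gamma_{i-1}\right)^{n-m}$ for $n\geq m$, one takes $(n-m)$-th roots and passes to the limit along $n-m\to+\infty$ to obtain $\underline{\beta}_{A}(\xi_{i})\geq\gamma_{i-1}/\rho_{\gamma_{i-1}}>\gamma_{i-1}$ for every $\gamma_{i-1}\in(b_{i-1},a_{i})$, and then lets $\gamma_{i-1}\uparrow a_{i}$. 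The single place that calls for a little care --- and the only (mild) obstacle in an otherwise immediate argument --- is keeping track of the directions of the inequalities when one inverts the ratio and moves between the $\limsup$ of a reciprocal and the $\liminf$ of the original sequence.
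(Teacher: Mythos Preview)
Your proposal is correct and follows exactly the paper's approach: the paper derives the corollary in one line from Theorem~\ref{T1}, Remark~\ref{obnp}, and the trivial inequality $\underline{\beta}_{A}(\xi_{i})\leq\overline{\beta}_{A}(\xi_{i})$, which is precisely the chain $a_{i}\leq\underline{\beta}_{A}(\xi_{i})\leq\overline{\beta}_{A}(\xi_{i})\leq b_{i}$ you assemble. Your additional justification of the half of Remark~\ref{obnp} actually needed (rerunning Step~2 with $\liminf$) is a welcome expansion of what the paper leaves implicit.
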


\begin{theorem}
\label{T2}
For any $\xi\in \mathbb{R}^{d}\setminus\{0\}$ it follows that
\begin{displaymath}
[\underline{\beta}_{A}(\xi),\overline{\beta}_{A}(\xi)] \subseteq [a_{1},b_{\ell}].
\end{displaymath}
\end{theorem}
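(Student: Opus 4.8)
The plan is to re-run the two-step argument of the proof of Theorem~\ref{T1}, but now applied to the two \emph{outermost} connected components of the resolvent, namely $\rho_0(A)=(0,a_1)$ and $\rho_\ell(A)=(b_\ell,+\infty)$. The key observation is that, by Remark~\ref{obs1} iii), the invariant projectors attached to these components are the trivial ones: $P_{\gamma_0}=0$ for every $\gamma_0\in\rho_0(A)$ and $P_{\gamma_\ell}=I$ for every $\gamma_\ell\in\rho_\ell(A)$. Hence \emph{every} nonzero $\xi\in\mathbb{R}^{d}$ automatically satisfies $P_{\gamma_\ell}(0)\xi=\xi$ (i.e. $\xi\in\ima P_{\gamma_\ell}(0)$) and $\xi\in\ker P_{\gamma_0}(0)$, which are exactly the two membership hypotheses that drove Steps~1 and~2 of Theorem~\ref{T1}.

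Concretely, for the upper bound I would fix $\gamma_\ell\in(b_\ell,+\infty)$ and repeat the chain of identities and estimates of Step~1 of Theorem~\ref{T1} verbatim, with $\xi$ replacing $\xi_i$ and $\gamma_\ell$ replacing $\gamma_i$; using that (\ref{lineal-g}) with $\gamma=\gamma_\ell$ has an exponential dichotomy with projector $P_{\gamma_\ell}=I$ and constants $K_{\gamma_\ell},\rho_{\gamma_\ell}$, one obtains for $n\ge m$ the bound $||X(n,0)\xi||/||X(m,0)\xi||\le K_{\gamma_\ell}(\rho_{\gamma_\ell}\gamma_\ell)^{n-m}$, so that $\overline{\beta}_{A}(\xi)\le\rho_{\gamma_\ell}\gamma_\ell<\gamma_\ell$; since $\gamma_\ell$ is an arbitrary point of $(b_\ell,+\infty)$ this gives $\overline{\beta}_{A}(\xi)\le b_\ell$. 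Symmetrically, for the lower bound I would fix $\gamma_0\in(0,a_1)$ and reproduce Step~2 of Theorem~\ref{T1} with $\xi$ replacing $\xi_i$ and $\gamma_0$ replacing $\gamma_{i-1}$ (here $X(n,0)\xi\ne 0$ by \textbf{(A1)}), reaching $\big(||X(n,0)\xi||/||X(m,0)\xi||\big)^{-1}\le K_{\gamma_0}(\rho_{\gamma_0}/\gamma_0)^{n-m}$ for $n\ge m$, hence $1/\underline{\beta}_{A}(\xi)\le\rho_{\gamma_0}/\gamma_0<1/\gamma_0$, that is $\underline{\beta}_{A}(\xi)>\gamma_0$; letting $\gamma_0$ range over $(0,a_1)$ yields $\underline{\beta}_{A}(\xi)\ge a_1$. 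Finally I would combine the two inequalities with $\underline{\beta}_{A}(\xi)\le\overline{\beta}_{A}(\xi)$ (and with the fact that $a_1$ and $b_\ell$ are finite) to conclude $[\underline{\beta}_{A}(\xi),\overline{\beta}_{A}(\xi)]\subseteq[a_1,b_\ell]$.

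I do not expect a genuine obstacle: the argument is essentially Theorem~\ref{T1} applied at the extreme indices. The only point requiring care is the reduction in the first paragraph, i.e. verifying that the triviality of $P_{\gamma_0}$ and $P_{\gamma_\ell}$ indeed lets an arbitrary $\xi$ play the role that a fiber element $\xi_i\in\mathcal{W}_{i}(0)$ played before. One could instead decompose $\xi=\xi_1+\cdots+\xi_\ell$ along the Whitney sum and apply Theorem~\ref{T1} fiber by fiber, but then one must control the Bohl exponents of a \emph{sum} of solutions; while $\overline{\beta}_{A}$ of a sum is bounded above by the largest of the summands' exponents, the lower exponent $\underline{\beta}_{A}$ of a sum is not simply the extreme of the individual lower exponents, so the direct argument above is the cleaner route.
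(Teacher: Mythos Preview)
Your proposal is correct and follows essentially the same approach as the paper: both exploit Remark~\ref{obs1}(iii) to see that $P_{\gamma_0}=0$ and $P_{\gamma_\ell}=I$, so that any nonzero $\xi$ lies in $\ima P_{\gamma_\ell}(0)\cap\ker P_{\gamma_0}(0)$, and then re-run Steps~1 and~2 of Theorem~\ref{T1} at the extreme spectral gaps. The only cosmetic difference is that the paper first bounds $\overline{\beta}_{A}(\xi)$ from both sides and then says the analogous argument gives $\underline{\beta}_{A}(\xi)\in[a_1,b_\ell]$, whereas you directly bound $\overline{\beta}_{A}(\xi)\le b_\ell$ and $\underline{\beta}_{A}(\xi)\ge a_1$ and invoke $\underline{\beta}_{A}(\xi)\le\overline{\beta}_{A}(\xi)$; your aside on the Whitney-sum route is a nice remark but is not needed.
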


\begin{proof}
Let $\gamma_{0}\in \rho_{0}(A)$ and $\gamma_{\ell}\in \rho_{\ell}(A)$, then by statement (iii) of Remark \ref{obs1}, we have that $P_{\gamma_{0}}(0)=0$
and $P_{\gamma_{\ell}}(0)=I$. Then we have that $\xi \in \ima P_{\gamma_{\ell}}(0)$ and $\xi \in \ker P_{\gamma_{0}}(0)$.

As $\xi \in  \ima P_{\gamma_{\ell}}(0)$ we can proceed as in step 1 of the proof of Theorem \ref{T1} with $i=\ell$ and we will obtain that
$\overline{\beta}_{A}(\xi)\leq b_{\ell}$. Similarly; as $\xi \in \ker P_{\gamma_{0}}(0)$; we can proceed as in step 2 of the proof of Theorem \ref{T1} with $i=1$ and we will obtain that
$\overline{\beta}_{A}(\xi)\geq a_{1}$ and the property $\overline{\beta}_{A}(\xi) \in [a_{1},b_{\ell}]$  is verified. Finally, by following the previous lines we can deduce 
that $\underline{\beta}_{A}(\xi) \in [a_{1},b_{\ell}]$ and the result follows.
\end{proof}

Theorems \ref{T1} and \ref{T2} prompt the following question:
\begin{itemize}
    \item[\textbf{(Q)}] Can the extrema of either $[a_{i},b_{i}]$ or $\Sigma(A)$ be lower and upper Bohl's  exponents of a pair of initial conditions of (\ref{lineal})?.
\end{itemize}

As pointed out in \cite[pag.424]{Potzsche}, the answer is always affirmative when we consider the exponential dichotomy spectrum on the half--line; nevertheless; the answer seems more elusive when considering the 
whole axis. In order to explain this problem, we need to recall that the following definition:
\begin{definition}\cite{Gohberg,Nieza-15}
\label{KS}
The linear system (\ref{lineal}) is kinematically similar to 
\begin{equation}
\label{triangular}
y(n+1)=U(n)y(n)
\end{equation}
if there exists an invertible transformation $F\colon \mathbb{Z}\to M_{d}(\mathbb{R})$ with 
$$
\sup\limits_{n\in \mathbb{Z}}\{||F(n)||,||F^{-1}(n)||\}<+\infty
$$
such that the change of variables $y_{n}=F^{-1}(n)x_{n}$ leads to (\ref{triangular}).
\end{definition}

In addition, it is easy to prove that (see \emph{e.g.} \cite[p.183]{Siegmund}) the linear system (\ref{lineal}) is kinematically similar to an upper triangular system. From now on, we will assume that the system (\ref{triangular}) is upper triangular.

Contrarily to the autonomous case, the spectrum $\Sigma(U)$ of an upper triangular system does not always coincides with the spectrum of its diagonal  
coefficients, which prompts to consider the following property: 
\begin{definition}\cite{Potzsche}
The upper triangular system (\ref{triangular}) is diagonally significant if
$$
\Sigma(U)=\bigcup\limits_{i=1}^{d}\Sigma(u_{ii}),
$$
where $\Sigma(u_{ii})$ are the exponential dichotomy spectra of the scalar equations:
\begin{equation}
\label{escalares}
y_{i}(n+1)=u_{ii}(n)y_{i}(n) \quad \textnormal{for $i=1,\ldots,d$}.
\end{equation}
\end{definition}

The property of diagonal significance is always verified when considering the spectrum $\Sigma(U)$ on the half--line. Surprisingly enough; this is not the case
when considering the spectrum on $\mathbb{Z}$, which arises several difficulties when working on \textbf{(Q)}.

Finally, we will show some examples where \textbf{(Q)} has an affirmative answer.

\begin{example}
If (\ref{lineal}) is kinematically similar to an upper triangular system (\ref{triangular}) and:
\begin{itemize}
\item[i)] Its diagonal terms verifies 
\begin{equation}
\label{cotitas-0}
0<\inf\limits_{n\in \mathbb{Z}}|u_{ii}(n)| \leq \sup\limits_{n\in \mathbb{Z}}|u_{ii}(n)|<+\infty \quad \textnormal{for any $i=1\ldots,d$},
\end{equation}
\item[ii)] The system (\ref{triangular}) is diagonally significant.
\end{itemize}
Then for any spectral interval $[a_{i},b_{i}]$ there exist $\eta,\eta'\in \mathbb{R}^{d}\setminus\{0\}$ such that
$a_{i}=\underline{\beta}_{U}(\eta)$ and $b_{i}=\overline{\beta}_{U}(\eta')$. 
\end{example}

Indeed, the property (ii) combined with the fact that the exponential dichotomy spectrum is invariant by ki\-ne\-ma\-tical si\-mi\-larity (see \emph{e.g}, \cite[Cor.2.2]{Siegmund}) yield $\Sigma(A)=\Sigma(U)=\Sigma(u_{11})\cup \cdots \cup \Sigma(u_{dd})$.

By using Theorem 1 from \cite{Babiarz15} for $d=1$ (see also \cite[p.427]{Potzsche}), we know that (\ref{escalares}) has a unique lower and a unique upper Bohl's exponent. They are defined respectively as:
$$
\underline{\beta}(u_{ii})=\liminf\limits_{n-m\to +\infty}\prod\limits_{k=m}^{n-1}|u_{ii}(k)| \quad \textnormal{and} \quad
\overline{\beta}(u_{ii})=\limsup\limits_{n-m\to +\infty}\prod\limits_{k=m}^{n-1}|u_{ii}(k)|,
$$
which are independent of any initial condition. In addition, by using (\ref{cotitas-0}) combined with Proposition 2.4 from \cite{Potzsche} we have that
$$
\Sigma(u_{ii})=[\underline{\beta}(u_{ii}),\overline{\beta}(u_{ii})] \quad \textnormal{for any $i=1,\ldots,d$},
$$
and we conclude that any spectral interval $[a_{i},b_{i}]$ is a finite union of closed intervals whose boundary is composed by lower and upper Bohl's exponents of (\ref{triangular}).

The second example uses the fact that diagonal systems trivially satisfy the property of diagonal significance:
\begin{example}
If (\ref{lineal}) is kinematically similar to a diagonal system $y(n+1)=Dy(n)$ described by 
\begin{equation}
\label{diagonal}
y_{i}(n+1)=d_{i}(n)y_{i}(n)  \quad  \textnormal{with}  \quad  i=1,\ldots,d,
\end{equation}
where the diagonal terms verify (\ref{cotitas-0}),
then for any spectral interval $[a_{i},b_{i}]$ there exist $\eta,\eta'\in \mathbb{R}^{d}\setminus\{0\}$ such that
$a_{i}=\underline{\beta}_{D}(\eta)$ and $b_{i}=\overline{\beta}_{D}(\eta')$.
\end{example}

The property of diagonal significance plays a key role in the above examples. As we pointed out, this property  is trivially verified when considering the exponential dichotomy spectrum on the half--line. Nevertheless, the
diagonal significance is not always verified on the whole line and some sufficient conditions ensuring it are presented in \cite[Sect.4]{Potzsche}. To answer \textbf{(Q)} when diagonal significance is not verified remains as an open question.

\begin{remark}
As we were finishing this note, we realized that Barreira \emph{et al.} \cite[Th.7]{Barreira} studied a sequence of noninvertible bounded linear operators acting on a Banach space and developed a 
spectral theory based in the non--uniform exponential dichotomy. The authors obtained results related to our Theorems \ref{T1} and \ref{T2} considering Lyapunov exponents instead Bohl's ones. Nevertheless, 
this last fact combined with the use of other dichotomy property induced some technical differences between both approaches. 
\end{remark}

\end{document}